\newtheorem{defi}{Definition}
\newtheorem{thm}{Theorem}
\newtheorem{thm*}{Theorem}
\newtheorem{lemm}{Lemma}
\renewcommand{\@biblabel}[1]{#1.}
\title{On facets of the Newton polytope \\
		for the discriminant of the polynomial system}
\author{Irina Antipova\footnote{The first author was supported by the Theoretical Physics and Mathematics Advancement Foundation “BASIS” (№ 18-1-7-60-1),  and the Krasnoyarsk Mathematical Center, funded by the Ministry of Science and Higher Education of the Russian Federation within the framework of the establishment and development of regional Centers for Mathematics Research and Education (Agreement No. 075-02-2021-1388).},
	\, Ekaterina  Kleshkova\footnote{ The second  author was supported by the Theoretical Physics and Mathematics Advancement Foundation “BASIS” (№ 18-1-7-60-2).}}
\date{}
\begin{document}
	\maketitle
	\begin{abstract}
We study normal directions to facets of the Newton polytope of the discriminant of the Laurent polynomial system via the tropical approach. We use the combinatorial construction proposed by Dickenstein, Feichtner and Sturmfels for the tropicalization of algebraic varieties admitting a parametrization by a linear map followed by a monomial map.
	\end{abstract}

\noindent {\it Keywords: discriminant, Newton polytope, tropical variety, Bergman fan, matroid.} 
\bigskip

\noindent  {\it 2020 Mathematical Subject Classification:} {14M25, 14T15, 32S25.}

	\section{Introduction}
	
	  Consider a system of $n$ polynomial equations of the form
\begin{equation}
\label{ex1}
P_i := \sum_{\lambda \in A^{(i)}} a_{\lambda}^{(i)} y^{\lambda} =0, \,\, i=1, \ldots, n
\end{equation}
with unknowns $y=(y_1, \ldots, y_n)\in {( \mathbb{C} \setminus 0  )}^n$, variable complex coefficients $a=(a_{\lambda}^{(i)}) $, 
where the sets  $A^{(i)} \subset \mathbb{Z}^n$ are fixed, finite and contain the zero element $\overline{0}$, 
$\lambda = (\lambda_1, \ldots, \lambda_n)$, $y^\lambda = y_1^{\lambda_1}\cdot\ldots\cdot y_n^{\lambda_n}$. 
Solution $y(a)=( y_1 (a), \ldots, y_n (a) )$ of~(\ref{ex1}) has a polyhomogeneity property, and thus
the system usually can be written in a homogenized form by means of monomial transformations $x=x(a)$ of coefficients 
(see \cite{ATs12}). To do this, it is necessary to distinguish a collection of $n$ exponents $\omega^{(i)} \in A^{(i)}$
such that the matrix $\omega = \left( \omega^{(1)}, \ldots, \omega^{(n)}  \right)$ is non-degenerated. 
As a result, we obtain a reduced system of the form
\begin{equation}
\label{ex2}
Q_{i} := y^{\omega^{(i)}} + \sum_{\lambda \in \Lambda^{(i)}} x_{\lambda}^{(i)} y^{\lambda} -1 =0, \,\, i=1, \ldots, n,
\end{equation}
where $\Lambda^{(i)} := A^{(i)} \setminus \{ \omega^{(i)}, \overline{0} \}$ and
$x=(x_\lambda^{(i)})$ are variable complex coefficients. Denote  by $\Lambda$ the disjoint union of the sets $\Lambda^{(i)}$
and by $N$  the cardinality of this union, that is the number of variable coefficients in the system~(\ref{ex2}). 
The coefficients of the system vary in the vector space ${\mathbb C}_{x}^N$ in which points  $x=(x_{\lambda})$ 
are indexed by elements $\lambda \in \Lambda$.

Denote by $\nabla^{\circ}$ the set of all coefficients $x=\left( x_\lambda^{(i)}  \right)$ such that the polynomial
mapping $Q= \left( Q_1, \ldots, Q_n \right)$ has multiple zeroes in the complex algebraic torus ${ \left( \mathbb{C} \setminus 0 \right)^n}$, 
that is zeroes for which the determinant of the Jacobi matrix  $\frac{ \partial  Q}{ \partial y }$ of the mapping $Q$ equals zero. 
{\it The discriminant set} $\nabla$ of the system~(\ref{ex2}) is defined to be the closure of the set $\nabla^{\circ}$ 
in the space of coefficients. If $\nabla$ is a hypersurface, then the polynomial $\Delta (x)$ that defines it  
is said to be {\it the discriminant}  of the system~(\ref{ex2}). The set $\nabla$ is also called 
{\it the reduced discriminant set} of the system~(\ref{ex1}). 
According to the theory of $A$--discriminants developed in the book~\cite{GKZ},  the discriminant of a system of equations
 is appropriate to call the $(A^{(1)}, \dots , A^{(n)})$--discriminant.

 The goal our research is to construct  the tropicalization of the discriminant set $\nabla$ of the system~(\ref{ex2}) 
 and to find normal  vectors to facets of the Newton polytope of the discriminant $\Delta (x)$. 
 Recall that {\it the Newton polytope} ${\mathcal N}_{\Delta}$  of the polynomial  $\Delta (x)$  is defined to be the convex hull
 of its support  in ${\mathbb R}^n$. 

The idea of computing  the tropical discriminant  is adopted from the paper \cite{DFSt07}, 
where the general combinatorial construction  of the  tropicalization is given for algebraic varieties that admit
a parametrization in the form of the product of linear forms. We use the parametrization of the discriminant set for the
system of $n$ Laurent polynomials (\ref{ex2}) proposed and comprehensively studied in~\cite{ATs12}.
 
We write the set $\Lambda$  in the form of the block matrix $\Lambda=(\Lambda^{(1)}\vert \dots \vert \Lambda^{(n)})$
whose columns are the vectors of exponents of monomials of the system. Introduce $(n \times N)$--matrices
\begin{equation*}
\Psi := \omega^{*} \Lambda, \,\,\, \tilde{\Psi} := \Psi - {\vert\omega \vert} \chi,
\end{equation*}
where $\omega^{*} $ is the adjoint matrix to the matrix $\omega$, $\chi$ is the matrix whose $i$-th row represents the 
characteristic function of the subset $\Lambda^{(i)}\subset\Lambda$ and $\vert\omega \vert$ is the determinant of the matrix $\omega$.
 In what follows, we denote the rows of the matrices $\Psi$ and $\tilde{\Psi}$ 
by $\psi_1,\dots , \psi_n$ and $\tilde{\psi}_1,\dots , \tilde{\psi}_n$  correspondingly, 
and we denote the rows of the identity matrix $E_N$ by $e_1, \ldots, e_N$. One of the main results of this paper is
\begin{thm}
\label{th}
The vectors
	\begin{equation}\label{norm}
	e_1, \ldots , e_N , -\psi_1 , \ldots , -\psi_n, \tilde{\psi_1},\ldots, \tilde{\psi_n} \in \mathbb{Z}^N 
	\end{equation}
	define the normal directions of facets of the Newton polytope of the discriminant for the system~(\ref{ex2}). 
\end{thm}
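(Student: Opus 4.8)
The plan is to compute the constant-coefficient tropicalization $\mathrm{Trop}(\nabla)$ of the discriminant hypersurface and to read off its rays. The guiding principle is the standard dictionary between a tropical hypersurface and its Newton polytope: for the Laurent polynomial $\Delta(x)$ the set $\mathrm{Trop}(V(\Delta))$ is the locus where the piecewise-linear function $w \mapsto \max_{\alpha}\langle \alpha, w\rangle$ (the maximum taken over the support of $\Delta$) fails to be smooth, which is exactly the codimension-one skeleton of the normal fan of $\mathcal{N}_\Delta$. Hence the one-dimensional cones of $\mathrm{Trop}(\nabla)$ coincide, as directions, with the normals of the facets of $\mathcal{N}_\Delta$. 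In this way the theorem becomes equivalent to the assertion that the rays of $\mathrm{Trop}(\nabla)$ are generated precisely by the vectors in~(\ref{norm}), and the whole problem reduces to computing this fan and verifying that no ray is missing and none is spurious.

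To carry out that computation I would invoke the parametrization of $\nabla$ established in~\cite{ATs12}. Through the Horn--Kapranov-type uniformization a point of the discriminant set is written so that each coordinate $x_\lambda^{(i)}$ is a monomial in a torus parameter multiplied by a product of linear forms in the remaining parameters, with the exponent bookkeeping carried out precisely by the rows of $\Psi$ and $\tilde{\Psi}$. Thus $\nabla$ is the closure of the image of a map of the shape (monomial map)$\circ$(linear map), which is exactly the input required by the combinatorial construction of Dickenstein, Feichtner and Sturmfels~\cite{DFSt07}.

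By the tropical-discriminant theorem of~\cite{DFSt07} the tropicalization of such a variety is obtained by tropicalizing the two stages separately: the monomial stage tropicalizes to the integer linear map recorded by the matrices above, while the linear stage tropicalizes to the Bergman fan $\mathcal{B}(M)$ of the matroid $M$ of the configuration of parametrizing linear forms. Concretely, $\mathrm{Trop}(\nabla)$ is the Minkowski sum of the lineality space coming from the torus action and the image, under the linear map determined by $\Psi$ and $\tilde{\Psi}$, of $\mathcal{B}(M)$. The next step is to push the rays of $\mathcal{B}(M)$ --- which are indexed by the flats (equivalently the cocircuits) of $M$ --- through this linear map. Carrying this out, the rank-one flats together with the coordinate stratification contribute the standard basis vectors $e_1,\dots,e_N$, while the images of the remaining flats collapse onto the two families $-\psi_1,\dots,-\psi_n$ and $\tilde{\psi}_1,\dots,\tilde{\psi}_n$.

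The hard part will be exactly this last identification: one must match the purely combinatorial rays of $\mathcal{B}(M)$, labelled by flats of $M$, with the concrete integer vectors of~(\ref{norm}), and then prove both \emph{completeness} (every facet normal occurs in the list) and \emph{irredundancy} (each listed vector genuinely spans a one-dimensional cone of the fan, i.e.\ corresponds to a facet and not to a face of smaller dimension). I expect completeness to follow from the surjectivity of the parametrization together with the balancing condition on $\mathrm{Trop}(\nabla)$, and irredundancy from a dimension count carried out modulo the lineality space; reading off the facet normals from the surviving rays then finishes the argument.
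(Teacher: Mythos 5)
Your overall route is the same as the paper's: parametrize $\nabla$ as a monomial map composed with a linear map (the Horn--Kapranov-type uniformization of \cite{ATs12}), apply the Dickenstein--Feichtner--Sturmfels theorem to write the tropicalization as the image of the Bergman fan of the matroid of the parametrizing linear forms under the integer linear map recorded by $\Psi$ and $\tilde\Psi$, and read facet normals off the rays of the resulting fan. Up to that point there is no disagreement. One correction of detail: the vectors in (\ref{norm}) all arise as images of the Bergman rays attached to the \emph{one-element} flats --- they are exactly the columns of the matrix $V=\left(|\omega|E_N\,\vert -\Psi^T\vert\, \tilde\Psi^T\right)$ --- rather than a mixture of rank-one flats and ``remaining flats collapsing'' onto two families, as you describe.

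The genuine problem is that you set out to prove completeness: you reformulate the theorem as the assertion that the rays of $\mathrm{Trop}(\nabla)$ are generated \emph{precisely} by the vectors in (\ref{norm}) and propose to verify that ``no ray is missing.'' That statement is false, and the paper says so explicitly in the introduction: Theorem~1 only claims that each listed vector \emph{is} a facet normal direction. Section~4 exhibits a system whose Newton polytope has ten facets while the list (\ref{norm}) supplies only seven normals; the three ``hidden'' normals arise where images of distinct two-dimensional cones of the Bergman fan intersect in new rays that are not images of Bergman rays. No appeal to surjectivity of the parametrization or to the balancing condition can rescue a completeness claim that this example refutes. Meanwhile, the part you defer as ``the hard part'' and leave unproved --- that each column of $V$ genuinely spans a one-dimensional cone of the normal fan of $\mathcal{N}_\Delta$, i.e.\ is a facet normal rather than a vector in the relative interior of a higher-dimensional normal cone --- is the only content the theorem actually asserts. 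So, as written, the proposal aims at a false strengthening and does not supply an argument for the true statement.
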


The discriminant of the system of polynomials studied in the current paper is the generalization of 
the notion of $A$--discriminant \cite{GKZ}. 
In \cite{K91},  Kapranov proved  that $A$--discriminants characterize hypersurfaces with the birational logarithmic Gauss mapping,
and  that reduced discriminant hypersurfaces admit the parametrization by monomials depending on linear forms, which is called
 the {\it Horn-Kapranov uniformization}. The tropical analogue of the  Horn-Kapranov uniformization was obtained in~\cite{DFSt07}. 

 In the present research,  we use the parametrization of the discriminant set that occurs to be the inverse of the logarithmic
 Gauss mapping in the case when the set $\nabla$ is an irreducible hypersurface that depends on all groups of the variable
coefficients of  the system (see~\cite{ATs12}). However,  the logarithmic Gauss mapping for the studied hypersurface $\nabla$
is not always birational, thus, in general, $(A^{(1)}, \dots , A^{(n)})$--discriminants are not reduced  to $A$--discriminants, and
consequently require special research.

It is important to note that Theorem~1 does not give all normal directions to facets of the Newton polytope
but only those that are represented in the parametrization of the discriminant set $\nabla$  explicitly
as vectors of coefficients in linear forms.  Properties of the parametrization are described in Section~2 of the paper.
Further, the construction of the tropical variety and the proof of  Theorem~1 are given in  Section~3. In final Section~4, we explain
how the constructed tropical variety brings out the so-called <<hidden>> facets  of the Newton polytope for the discriminant of the system.

	\section{Parametrization of the discriminant set $\nabla$}
	
We introduce two copies of the space $\mathbb{C}^N$: one copy $\mathbb{C}^N_x$ with
coordinates $x=(x_{\lambda})$, and another copy $\mathbb{C}^N_s$ with coordinates
$s=(s_\lambda)$. In both cases, coordinates of points are indexed by  elements
$\lambda \in \Lambda$. The space $\mathbb{C}^N_s$ is considered as the space of 
homogeneous coordinates for $\mathbb{CP}_{s}^{N-1}$. It is proved in~\cite{ATs12}  that
the parametrization of the discriminant set~$\nabla$ of the system~(\ref{ex2})
is determined by the multivalued algebraic mapping from the projective space $\mathbb{CP}^{N-1}_s$
into the space ${\mathbb C}_{x}^N$ of the coefficients of the system with components
\begin{equation}
\label{par}
	x_{\lambda}^{(i)} = -\frac{s_{\lambda}^{(i)}}{\langle\tilde{\varphi_i}, s\rangle}
	\prod_{k=1}^n { \left( \frac{ \langle \tilde{\varphi}_k , s \rangle }{ \langle {\varphi}_k , s \rangle } \right) }^{\varphi_{k \lambda}},\,\, \lambda\in\Lambda^{(i)}, \,\,i=1, \ldots, n,
\end{equation}
where $\varphi_k$ and $\tilde{\varphi}_k$ are rows of  matrices $\Phi:=\omega^{-1}\Lambda$
and $\tilde{\Phi}:=\Phi-\chi$ correspondingly, 
$\varphi_{k \lambda}$ is a coordinate of the row~$\varphi_k$  indexed by $\lambda \in \Lambda^{(i)} \subset \Lambda$,
the angle brackets denote the inner product. The number of branches in~(\ref{par}) 
equals to the absolute value of the determinant $|\omega|$ but some branches 
may coincide. If the discriminant set~$\nabla$ of the system~(\ref{ex2}) is an irreducible
hypersurface that depends on all groups of variables, then the mapping~(\ref{par}) 
parametrizes it with the multiplicity that is equal to the index  $\vert {\mathbb Z}^n:H\vert$
of the sublattice $H\subset {\mathbb Z}^n$ generated by columns of the matrix $(\omega \vert \Lambda)$, i.e.,
by all exponents of the monomials from the system~(\ref{ex2}). The image of the mapping~(\ref{par})
is a hypersurface if all coordinates of  vectors $\varphi_k$, $\tilde{\varphi}_k$ are non-zero,  $k=1,\dots , n.$
	
We consider the rational mapping $\mathbb{CP}^{N-1}_s\to {\mathbb C}_w^N$ that is obtained from the mapping~(\ref{par})
as a result of raising of all its coordinates to the power~$|\omega|$. It has components
\begin{equation}
	\label{par_1}
	w_{\lambda}^{(i)} = { \left( - \frac{|\omega| s_\lambda^{(i)} }{ \langle \tilde{\psi}_i , s \rangle } \right) }^{ |\omega| } \prod_{k=1}^n { \left( \frac{ \langle \tilde{\psi}_k , s \rangle }{ \langle {\psi}_k , s \rangle } \right) }^{\psi_{k \lambda}},
\end{equation}
where $\psi_{k \lambda}$ is a coordinate of the row~$\psi_k$ indexed
 by $\lambda \in \Lambda^{(i)} \subset \Lambda$. The mapping~(\ref{par_1}) defines
 the hypersurface $\tilde{\nabla}\subset {\mathbb C}_{w}^N$ whose amoeba has the same
 asymptotic directions as the amoeba of the discriminant hypersurface~$\nabla$. 
 Recall that the {\it amoeba} $\mathcal{A}_\nabla$  of the algebraic hypersurface
$$
	\nabla=\{x\in \left({\mathbb{C}}\setminus 0\right)^{N}: \Delta(x)=0\}
$$
is the image of $\nabla$ by the mapping ${\mbox{Log}}:\left({\mathbb{C}}\setminus 0\right)^{N}\to {\mathbb{R}}^N$
defined by means of the formula
$$
	{\mbox{Log}}:(x_1,\dots , x_N)\to (\log |x_1|,\ldots , \log |x_N|).
$$
	Each component of the mapping~(\ref{par_1}) is a monomial with integer exponents composed
	with  linear functions. It is convenient to associate the mapping~(\ref{par_1}) with
	the pair of two block matrices
	\begin{equation}
	\label{matr}
	U=
	\left( { -|\omega|} E_N\left\vert \Psi^T \right\vert \tilde{\Psi}^T \right)^T, \,\,\,\,
	V=\left( {|\omega|} E_N\left\vert -\Psi^T \right\vert \tilde{\Psi}^T \right),	 
	\end{equation}
	where $E_N$ is the identity matrix.  The rows of the matrix~$U$ determine linear functions, 
	and the rows of $V$ determine  exponents of  monomials in the parametrization~(\ref{par_1}).

	\section{Tropicalization of the rational variety}
	
	Further, we pay attention to the algebraic variety~$\tilde{\nabla} \subset \mathbb{C}^N$.
Let us study its tropicalization $\tau(\tilde{\nabla})$.  According to need, we recall 
notions and facts from the tropical geometry. Elements of this theory and numerous
references to fundamental research can be found in the book~\cite{MSt15}. Since the variety~ $\tilde{\nabla}$
admits the parametrization~(\ref{par_1}) in the form of the product of linear forms, we 
implement  the general combinatorial construction 
proposed in ~\cite{DFSt07} in order to compute the tropical variety~$\tau(\tilde{\nabla})$.  
	
A tropical variety has the structure of a polyhedral fan. In particular, the
tropicali\-zation of a linear subspace $X\subset{\mathbb C}^k$ is the Bergman fan
of the matroid $M(X)$ associated with this subspace (see  \cite{A18}, \cite{FSt05}). 
The construction of the Bergman fan of a variety~$X$ is related to the notion
of the logarithmic limit-set of the variety~$X$ introduced in~\cite{B71}. 
The Bergman fan of an irreducible subvariety~$X\subset {\mathbb{C}}^k$ is 
the finite union of convex polyhedral cones with the vertex at the origin of the dimension that coincides with the 
dimension of the variety~\cite{BG84}.
	
Let us dwell in more detail on the concept of a matroid, which covers the general combinatorial 
essence of the concepts of independence in linear algebra, graph theory, etc.
 There exist several different equivalent systems of axioms that define matroid 
 (see, for instance, \cite{A18}, \cite{MSt15}, \cite{Ox11} ). Let us determine
 matroid in terms of independent sets.
	
	\begin{defi} 
		The matroid~$M$ is defined to be a pair $( \mathcal{E}, \mathcal{I} )$ that consists of a finite set~$\mathcal{E}$
		and a collection~$\mathcal{I}$ of subsets of~$\mathcal{E}$ such that
		\begin{enumerate}
			\item[(I-1)] $\emptyset \in \mathcal{I}$.
			\item [(I-2)] If $J \in \mathcal{I}$ and $I \subseteq J$, then $I \in \mathcal{I}$.
			\item[(I-3)] If $I, J \in \mathcal{I}$ and $|I|<|J|$, then there exists $j \in J-I$ such that $I \cup j \in \mathcal{I}$.
		\end{enumerate} 
		Elements of~$\mathcal{I}$ are called independent sets.
	\end{defi}
	
	Is is supposed that all one-element subsets of~$\mathcal{E}$  are independent. 
	Maximal independent sets are called {\it bases} of a matroid. From the axiom~(I-3),
	it follows that all bases of a~matroid~$M$ have the same cardinality~$r=r(M)$ that
	is called the {\it rank} of the matroid. If~$A$ is a subset of the set~$\mathcal{E}$,
	then the cardinality of maximal independent set in $A$ is called the rank of~$A$,
	and it is denoted by $r(A)$. The minimal by inclusion dependent set $C\subset\mathcal{E}$
	is said to be the  {\it cycle} of the matroid $M$. A subset $F \subseteq \mathcal{E}$ is said to be the
	{\it flat} if $r( F \cup e )> r(F)$ for all $e \notin F$. We say ~$F$ 
	is {\it proper} if its rank equals neither $0$ nor $r$. Every flat~$F$ of 
	the matroid~$M$ is represented by the incidence vector
	\begin{equation*}
	e_F := \sum_{i \in F} e_i.
	\end{equation*}	
	The vector $e_F$ is considered as an element of the {\it tropical projective space}
	$\mathbb{R}^{|\mathcal{E}|} / \mathbb{R} \mathbf{1}$, where $\mathbf{1}=(1, 1, \ldots, 1)$.
	The partially ordered set of all flats forms the lattice of flats of the matroid.
	
	The Bergman fan is one of geometric models of matroids. We use the first statement of  Theorem 2
	as the definition of the Bergman fan of the matroid~$M$.
	
	\begin{thm}[Ardila--Klivans\cite{A18}] \ 
		\begin{enumerate}
			\item[(1)] The Bergman fan~$\mathfrak{B}_M$ of a matroid~$M$ on  $\mathcal{E}$
			is the polyhedral complex in $\mathbb{R}^{|{\mathcal E}|}/ \mathbb{R} \mathbf{1}$
			that consists of the cones
			\begin{equation*}
			\sigma_{\mathcal{F}} = \textit{cone}{ \lbrace e_F: F \in \mathcal{F} \rbrace }
			\end{equation*}
			for each flag $\mathcal{F}=\{ F_1 \subsetneq \ldots \subsetneq F_l  \}$ of proper flats of $M$.
			Here, $e_F:=e_{f_1}+ \ldots + e_{f_k}$ for $F=\{ f_1, \ldots, f_k \}$. 
			\item[(2)] The tropicalization of a linear subspace~$X$ is the Bergman fan~$\mathfrak{B}_{M(X)}$
			of the mat\-roid~$M(X)$ related to the subspace~$X$.	
		\end{enumerate}
	\end{thm}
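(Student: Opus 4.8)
The plan is to prove statement~(2) by reducing the tropicalization of a linear ideal to a condition on the circuits of the associated matroid, and then to prove statement~(1) as a purely combinatorial identity between that circuit condition and the complex of cones over flags of proper flats. First I would fix a realization $X=\mathrm{rowspan}(A)$ for a matrix $A$ of rank $r$, with vanishing ideal $I=I(X)$ generated by linear forms, so that $M(X)$ is the column matroid of $A$. The starting point is the characterization $\tau(X)=\{w:\mathrm{in}_w(I)\text{ contains no monomial}\}$ from the fundamental theorem of tropical geometry \cite{MSt15}. Since $I$ is generated in degree one, $\mathrm{in}_w(I)$ is monomial-free if and only if no linear form $\ell=\sum c_i x_i\in I$ attains its $w$-minimum at a single coordinate of its support. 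A direct computation shows that $c\in\mathbb{C}^{|\mathcal E|}$ gives a linear form in $I$ exactly when $Ac^{T}=0$, so the supports of nonzero linear forms in $I$ are precisely the dependent sets of $M(X)$, and the minimal ones are exactly its circuits. Consequently $\tau(X)=\{w:\text{for every circuit }C,\ \min_{i\in C}w_i\text{ is attained at least twice}\}$, which is the circuit description of $\mathfrak{B}_{M(X)}$; thus (2) follows once (1) identifies this set with the flag complex.

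The core of (1) is the matroid lemma: for any flat $F$ and any circuit $C$ one never has $|C\setminus F|=1$. Indeed, if $C\setminus F=\{e\}$ then $C-e\subseteq F$, whence $e\in\mathrm{cl}(C-e)\subseteq\mathrm{cl}(F)=F$, contradicting $e\notin F$. With this I would show each cone $\sigma_{\mathcal F}$ lies in the circuit set. Writing $w=\sum_j\lambda_j e_{F_j}$ with $\lambda_j\ge 0$ along a flag $F_1\subsetneq\dots\subsetneq F_l$ of proper flats, the coordinate $w_i=\sum_{j:\,i\in F_j}\lambda_j$ is determined by the first flat of the flag containing $i$ and decreases as $i$ enters the chain later. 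Hence for a circuit $C$ the set of $w$-minimizers is exactly $C\setminus F$, where $F$ is the largest flat of the flag (allowing $F_0=\emptyset$) with $C\not\subseteq F$. The lemma gives $|C\setminus F|\ge 2$, so the minimum is attained at least twice, and therefore $\bigcup_{\mathcal F}\sigma_{\mathcal F}$ is contained in the circuit set.

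For the reverse inclusion I would recover a flag from the level sets of a given $w$ in the circuit set. The claim is that every upper level set $L_{>t}=\{i:w_i>t\}$ is a flat: otherwise some $e\notin L_{>t}$ lies in $\mathrm{cl}(L_{>t})$, yielding a circuit $C\ni e$ with $C-e\subseteq L_{>t}$, so that $w_e\le t<w_i$ for all $i\in C-e$ and $\min_{i\in C}w_i$ is attained only at $e$, a contradiction. Taking the distinct coordinate values $v_1>\dots>v_k$ and $G_j=\{i:w_i\ge v_j\}$, these are flats forming a chain $G_1\subsetneq\dots\subsetneq G_{k-1}\subsetneq G_k=\mathcal E$; each $G_{j}$ with $j<k$ is proper, since the only full-rank flat is $\mathcal E$ and, for a loopless matroid, every nonempty flat has positive rank. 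The identity $w=\sum_{j=1}^{k-1}(v_j-v_{j+1})\,e_{G_j}+v_k\mathbf 1$ then exhibits $w$ inside $\sigma_{\mathcal G}$ modulo $\mathbb{R}\mathbf 1$. As any two of these cones meet along the cone indexed by their common subflag, the $\sigma_{\mathcal F}$ glue into a fan, and the two inclusions give $\mathfrak{B}_{M}=\bigcup_{\mathcal F}\sigma_{\mathcal F}$, proving (1); combined with the reduction above, $\tau(X)=\mathfrak{B}_{M(X)}$ as polyhedral fans.

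I expect the main obstacle to be the bookkeeping at the interface of the two descriptions rather than any single computation: pinning down, for a fixed circuit and flag, exactly which coordinates realize the $w$-minimum so that the flat lemma applies cleanly, and dually verifying that the level sets are flats of the admissible rank range while correctly handling loops, coloops, and the passage to the tropical projective quotient $\mathbb{R}^{|\mathcal E|}/\mathbb{R}\mathbf 1$. For statement~(2) the delicate point is the reduction of $\mathrm{in}_w(I)$ to the initial forms of linear elements of $I$ and the exact matching of circuits of $M(X)$ with minimal supports of linear forms in $I(X)$; once the convention that $M(X)$ is the column matroid of a matrix whose rows span $X$ is fixed, this matching is exact and the argument closes.
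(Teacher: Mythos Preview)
The paper does not prove this statement at all: Theorem~2 is quoted from Ardila--Klivans \cite{A18} and used as a black box (indeed, the paper explicitly says it takes part~(1) as the \emph{definition} of the Bergman fan, and then invokes part~(2) inside the proof of Theorem~1 without further justification). So there is no ``paper's own proof'' to compare against; you have supplied a proof where the authors simply cite one.

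That said, what you wrote is essentially the standard Ardila--Klivans argument and is correct in outline. The key combinatorial lemma $|C\setminus F|\neq 1$ for a flat $F$ and circuit $C$, the level-set construction of the flag from a point satisfying the circuit condition, and the telescoping identity expressing $w$ in the cone $\sigma_{\mathcal G}$ modulo $\mathbb{R}\mathbf{1}$ are all right. Two places deserve a little more care if you want the write-up to be airtight. First, in the forward inclusion your description of the $w$-minimizers on a circuit as ``exactly $C\setminus F$'' is only literally true when all $\lambda_j>0$; for the statement you need (minimum attained at least twice) this is harmless, but it is cleaner to argue via the largest index $k$ with $C\subseteq F_k$ and note $|C\setminus F_{k-1}|\ge 2$. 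Second, the reduction from ``$\mathrm{in}_w(I)$ monomial-free'' to the circuit condition is not quite immediate: one must observe that for a linear ideal a Gr\"obner basis can be taken linear, so a monomial in $\mathrm{in}_w(I)$ forces a linear form with unique $w$-minimum, and then iteratively subtract circuit vectors to shrink its support down to a circuit still containing that unique minimizer. You flag this as a delicate point at the end, which is fair, but it is the only place where something beyond straightforward bookkeeping is needed.
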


\begin{proof}[Proof of Theorem~1]
We first find the tropicali\-zation~$\tau(\tilde{\nabla})$ of 
the hypersurface~$\tilde{\nabla}$. Note that the tropicali\-zation of 
an irreducible hypersurface is the union of all cones of co\-dimension~$1$ 
of the normal fan for the Newton polytope of the polynomial defining the hypersurface.
Moreover, the collection of one-dimensional generators of the normal fan 
for the Newton polytope of the polynomial defining~$\tilde{\nabla}$
coincides with the same collection for the polynomial defining the discriminant
hypersurface~$\nabla$. 

Let $X$ be a linear subspace   in $\mathbb{C}^{N+2n}$, and let it  be the image 
of the linear mapping $s \rightarrow Us$ defined by the matrix $U$,
where $s\in {\mathbb{CP}^{N-1}_s}$. Let us consider the matroid~$M(X)$
on the set of rows of the matrix~$U$ (on the set~${\mathcal E}=\{ 1, 2, \ldots, {N+2n} \}$).
By Theorem~2, the Bergman fan $\mathfrak{B}_{M(X)}$ of the matroid~$M(X)$ 
is the tropicalization of the subspace $X$. The following lemma holds.
	
\begin{lemm}
\label{lemm_1}
The tropical variety~$\tau (\tilde{\nabla})$ is the image of the
Bergman fan~$\mathfrak{B}_{M(X)}$ under the mapping $\mathbb{R}^{N+2n} \rightarrow \mathbb{R}^{N} $
given by the matrix $V$.
\end{lemm}
	
\begin{proof}[Proof of Lemma~1]

Coordinates of the parametrization of the variety $\tilde{\nabla}$ 
given by~(\ref{par_1}) are monomials composed with linear forms. 
The linear forms are defined by the rows of the matrix~$U$
that is associated with the matroid $M(X)$ of the rank~$N$ on the set~$\mathcal{E}$.
Exponents of the monomials in parametrization are rows of the matrix~$V$.
According to \cite[Theorem~3.1]{DFSt07}, the tropicalization 
of the variety~$\tilde{\nabla}$ can be defined in terms of the matrices $U$ and $V$.
More precisely, the tropicalization is the image of the Bergman fan~$\mathfrak{B}_{M(X)}$
under the linear mapping~$V$.
\end{proof}

It follows from Lemma~1 that one-dimensional cones (rays) of the tropical 
variety $\tau (\tilde{\nabla})$ are generated by the columns of the matrix~$V$.
They do determine the normal directions for the facets of the Newton
polytope for the discriminant of the system. Thus Theorem~1 is proved.
\end{proof}
	
It is important to note that the acquired information about the structure of the normal
fan of the Newton polytope for the discriminant, as well as the parametrization~(\ref{par}),
can be used for the study of truncations  of the discriminant~$\Delta (x)$ of the system~(\ref{ex2}).
The {\it truncation} of the polynomial $\Delta (x)$ with respect to a face $h$
of the Newton polytope ${\mathcal{N}}_{\Delta}$ is defined to be the sum of all monomials
of $\Delta(x)$ whose exponents lie in the face $h$.
The geometric proof of  factorization identities for truncations 
of the classical discriminant with respect to hyperfacets of its Newton polytope
was given in the recent paper~ \cite{MSTs20}. The proof is based on the blow-up 
property of the logarithmic Gauss map for the zero set of the discriminant.
These identities were proved earlier in the book~\cite[Chapter~10]{GKZ} 
using the complicated machinery of the theory of $A$--determinants.
	\section{<<Hidden>> normal directions}
	Consider the system of two equations with two unknowns $y_1, y_2$ and three
variable coefficients~$x_1, x_2, x_3$:
\begin{equation}
\label{system_ex}
\left\{
		\begin{aligned}
		&{y_1}^2+x_1{y_1} {y_2}+x_2{y_1}{y_2}^2-1=0,\\
		&{y_2}^2+x_3{y_1}^2 y_2-1=0.\\
		\end{aligned}
\right.\end{equation}
The matrix of exponents of the system (\ref{system_ex}) has the form
\begin{equation*}
\left( \omega \vert \Lambda \right)=
	\left(
		\begin{array}{cc|ccc}
		2 & 0 & 1 & 1 & 2\\
		0 & 2 & 1 & 2 & 1 
		\end{array}
	\right).
\end{equation*}
The index of the sublattice in $\mathbb{Z}^2$ generated by its columns equals~$1$.
It is the greatest common divisor of all second order minors of the matrix.
Moreover, the matrices
\begin{equation*}
	\Phi=
	\left(
	\begin{array}{ccc}
	1/2 & 1/2 & 1\\
	1/2 & 1 & 1/2	
	\end{array}
	\right),
	\tilde{\Phi} =
	\left(
	\begin{array}{ccc}
	-1/2 & -1/2 & 1\\
	1/2 & 1 & -1/2	
	\end{array}
	\right)
\end{equation*}
do not contain zero elements, so the discriminant set~$\nabla$ of 
the system~(\ref{system_ex}) is the hypersurface, which can be parametrized 
with the multiplicity~$1$  by the mapping~$\mathbb{CP}_s^2 \to \mathbb{C}_x^3$ 
of the form
\begin{equation}
	\label{par_ex}
	\footnotesize{\left.
		\begin{aligned}				
	&x_1=- \frac{2s_1}{- s_1 - s_2 + 2s_3 } 
		{ \left( \frac{-s_1 - s_2 + 2s_3}{s_1 + s_2 + 2s_3 } \right) }^{ \frac{1}{2} } 
		{ \left( \frac{s_1 + 2s_2 - s_3}{s_1 + 2s_2 + s_3 } \right) }^{ \frac{1}{2} },\\
	&x_2=- \frac{2s_2}{- s_1 - s_2 + 2s_3 } 
		{ \left( \frac{-s_1 - s_2 + 2s_3}{s_1 + s_2 + 2s_3 } \right) }^{ \frac{1}{2} } 
		{ \left( \frac{s_1 + 2s_2 - s_3}{s_1 + 2s_2 + s_3 } \right) },\\
	&x_3=- \frac{2s_3}{s_1 + 2s_2 - s_3 } 
		{ \left( \frac{-s_1 - s_2 + 2s_3}{s_1 + s_2 + 2s_3 } \right) } 
		{ \left( \frac{s_1 + 2s_2 - s_3}{s_1 + 2s_2 + s_3 } \right) }^{ \frac{1}{2} }.
		\end{aligned}
	\right. }
\end{equation}
Here $s=(s_1, s_2, s_3)\in \mathbb{C}^3$ are the homogeneous coordinates in $\mathbb{CP}_s^2$.
	
Let us study the tropicalization~$\tau (\tilde{\nabla})$ of the rational variety~$\tilde{\nabla}\subset{\mathbb C}_w^3$
given as follows
{\	
\begin{equation}
	\label{par_ex_1}
\footnotesize{	\left.
		\begin{aligned}				
	&{w}_1={ \left( -4s_1 \right)}^{4} 
	    { \left( 2s_1 +2s_2 + 4s_3 \right) }^{-2} 
		{ \left( 2s_1+4s_2 + 2s_3 \right) }^{-2}
		{ \left( -2s_1 - 2s_2 + 4s_3 \right) }^{-2} 
		{ \left( 2s_1+4s_2 - 2s_3 \right) }^{2}  ,\\
	&{w}_2={ \left( -4s_2 \right)}^{4} 
	{ \left( 2s_1 +2s_2 + 4s_3 \right) }^{-2} 
		{ \left( 2s_1+4s_2 + 2s_3 \right) }^{-4}
		{ \left( -2s_1 - 2s_2 + 4s_3 \right) }^{-2} 
		{ \left( 2s_1+4s_2 - 2s_3 \right) }^{4}	,\\
	&{w}_3={ \left( -4s_3 \right)}^{4} 
	{ \left( 2s_1+2s_2 + 4s_3 \right) }^{-4}
	{ \left( 2s_1+4s_2 + 2s_3 \right) }^{-2}
	{ \left( -2s_1 -2s_2 + 4s_3 \right) }^{4} 
	{ \left( 2s_1 + 4s_2 - 2s_3 \right) }^{-2}.
		\end{aligned}
	\right. }
\end{equation}
}

As we noted above, the mapping ~(\ref{par_ex_1}) is associated with the matrices
\begin{equation*}
	U= \begin{pmatrix} -4 & 0 & 0 \\ 0 & -4 & 0 \\ 0 & 0 & -4 \\ 2 & 2 & 4 \\ 2 & 4 & 2 \\ -2 & -2 & 4 \\ 2 & 4 & -2   \end{pmatrix}\,\,
	\text{and}\,\,
	V= \begin{pmatrix} 4 & 0 & 0 & -2 & -2 & -2 & 2 \\ 0 & 4 & 0 & -2 & -4 & -2 & 4 \\ 0 & 0 & 4 & -4 & -2 & 4 & -2  \end{pmatrix},
\end{equation*}
that play a crucial role in constructing of the tropical 
variety~$\tau (\tilde{\nabla}) \subset \mathbb{R}^3$. 
Consider the matroid~$M$ on the set ${\mathcal E}=\{ 1, 2, 3, 4, 5, 6, 7 \}$ 
of rows of the matrix~$U$. The tropical linear space that is associated with the matroid~$M$
is the Bergman fan~$ \mathfrak{B}_M$. It is a two-dimensional fan in~$\mathbb{R}^7 / \mathbb{R}\mathbf{1}$
or a graph  depicted in Fig.~\ref{ris:pic_graf}.
		
\begin{figure}[H]
		\begin{center}
		\includegraphics[width=0.4\textwidth]{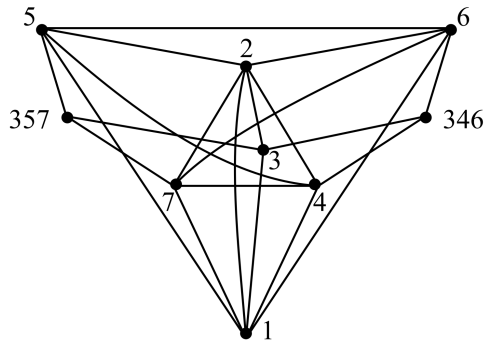}
		\caption{\small{The Bergman complex $ \mathfrak{B}_M$}}
		\label{ris:pic_graf}
		\end{center}
\end{figure}
	
The graph has nine vertices: seven of them correspond to one-element flats $1$,$2,$
$3,$ $4,$ $5,$ $6,$ $7$, and two vertices correspond to the cycles $346$ and $357$ 
of the matroid~$M$.
	
The image of the fan~$\mathfrak{B}_M$ under the mapping~$V$ is the two-dimensional
fan in~$\mathbb{R}^3$ (see Fig.~\ref{ris:vB}). It consists of sixteen cones:
fifteen of them are generated by the pairs $12$, $13$, $14$, $15$, $16$, $17$, $23$,
 $24$, $25$, $26$, $27$, $45$, $47$, $56$, $67$ of the columns of the matrix~$V$,
 and one cone is generated by the triple~$346$.
	
\begin{figure}[H]
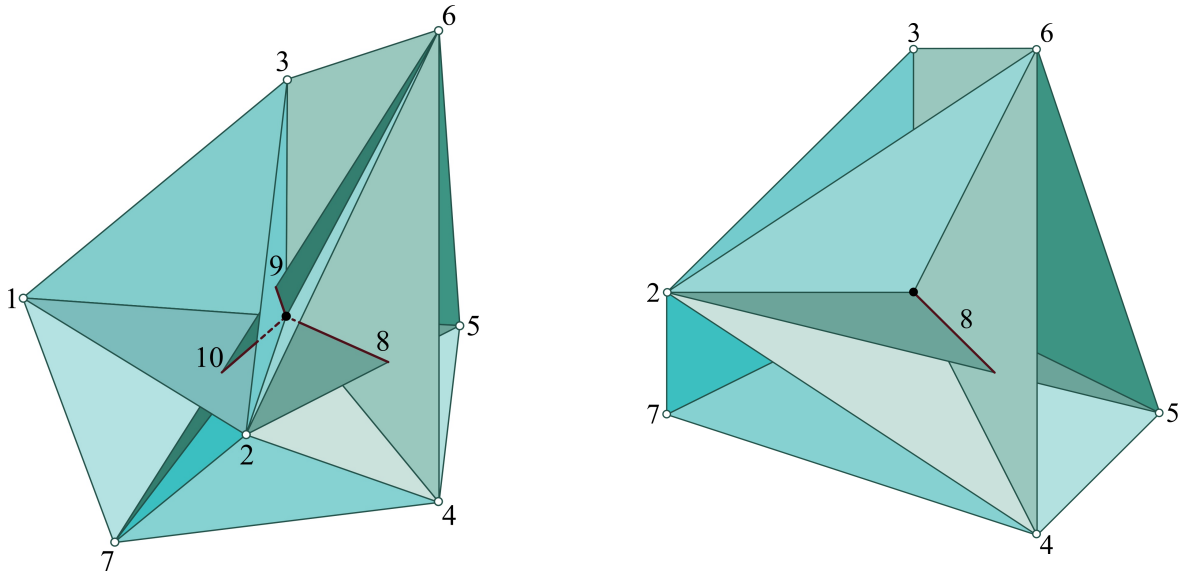

	\begin{center}
			\begin{minipage}[h]{0.45\linewidth}
				\begin{center}	
					\includegraphics[width=0.86\textwidth]{VB1_1}
				\end{center}
			\end{minipage}
			\hfill
			\begin{minipage}[h]{0.45\linewidth}
				\begin{center}
					\includegraphics[width=0.97\textwidth]{VB2_1}
				\end{center}
			\end{minipage}
			\caption{\small{The tropical variety~$\tau(\tilde{\nabla})$}}
			\label{ris:vB}
	\end{center}	
\end{figure}
	
The tropical variety~$\tau ( \tilde{\nabla} )$  constructed in this way allows to find the normal
fan for the Newton polytope $\mathcal{N}_{\Delta}$ of the discriminant of the system~(\ref{system_ex}). 
Seven rays of the fan $\tau (\tilde{\nabla})$ generated by the columns of the matrix~$V$ define
normal directions of facets of the Newton polytope~$\mathcal{N}_{\Delta}$. They are denoted by 
numbers $1$, $2$, $3$, $4$, $5$, $6$, $7$ in Fig.~2. 
These are exactly the normal directions obtained in Theorem~\ref{th}.
Besides these normals, there are three more rays arising as a result of  the intersection 
of cones of the fan~$\tau ( \tilde{\nabla} )$. Namely, the cones $346 $ and $ 25 $ intersect 
at the ray $\mathbb{R}_{\geqslant 0}{(-1,-1,-1)}^{T}$ (number~8 in Fig.~2),
the cones $67$ and $23$ intersect at the ray $\mathbb{R}_{\geqslant 0}{(0,1,1)}^{T}$ 
(number~9 in Fig.~2), while the cones $67$ and $12 $ intersect at the ray
$\mathbb{R}_{\geqslant 0}{(1,3,0)}^T$ (number 10 in Fig.~2). Therefore, ten 
inward  normals of the facets of the Newton polytope~$\mathcal{N}_{\Delta}$ are found:
	$$	
	\begin{array}{lcl}
	
	n_{1}=(4, 0, 0), & \qquad &  n_{6}=(-2, -2, 4),\\
	n_{2}=(0, 4, 0), & \qquad &  n_{7}=(2, 4, -2),\\
	n_{3}=(0, 0, 4), & \qquad &  n_{8}=(-1, -1, -1),\\
	n_{4}=(-2, -2, -4),  & \qquad &  n_{9}=(0, 1, 1),\\
	n_{5}=(-2, -4, -2),  & \qquad &  n_{10}=(1, 3, 0).\\
\end{array}
$$

The discriminant of the system~(\ref{system_ex}) is computed using 
the computer algebra system for polynomial computations {\sc Singular}~\cite{DGPS}.
It has the following form
	{\scriptsize{
			\begin{equation*}
			\begin{split}
			&\Delta(x)=
			432x_1^{11}x_3^3
			-1296x_1^9x_2^2x_3^3
			-64x_1^9x_3^5
			+1296x_1^7x_2^4x_3^3
			+192x_1^7x_2^2x_3^5
			-432x_1^5x_2^6x_3^3
			-192x_1^5x_2^4x_3^5{}\\
			%2
			&+64x_1^3x_2^6x_3^5
			+2976x_1^8x_2x_3^4
			-20916x_1^6x_2^3x_3^4
			-384x_1^6x_2x_3^6
			+9576x_1^4x_2^5x_3^4
			+2928x_1^4x_2^3x_3^6{}\\
			%3
			&-3300x_1^2x_2^7x_3^4
			-1248x_1^2x_2^5x_3^6
			+432x_2^7x_3^6
			+4032x_1^9x_3^3
			+13272x_1^7x_2^2x_3^3
			-3684x_1^7x_3^5{}\\
			%4
			&-45864x_1^5x_2^4x_3^3
			+8580x_1^5x_2^2x_3^5
			+432x_1^5x_3^7
			+43560x_1^3x_2^6x_3^3
			-91176x_1^3x_2^4x_3^5
			-1200x_1^3x_2^2x_3^7{}\\
			%5
			&-15000x_1x_2^8x_3^3
			+96x_1x_2^6x_3^5
			+12000x_1x_2^4x_3^7
			-4068x_1^8x_2x_3^2
			+25136x_1^6x_2^3x_3^2
			-10192x_1^6x_2x_3^4{}\\
			%6
			&-50568x_1^4x_2^5x_3^2
			+3584x_1^4x_2^3x_3^4
			-1056x_1^4x_2x_3^6
			+42000x_1^2x_2^7x_3^2
			-467344x_1^2x_2^5x_3^4
			+97335x_1^2x_2^3x_3^6{}\\
			%7
			&-12500x_2^9x_3^2
			+4800x_2^7x_3^4
			+4800x_2^5x_3^6
			-12500x_2^3x_3^8
			+11360x_1^7x_3^3
			+102792x_1^5x_2^2x_3^3
			+3552x_1^5x_3^5{}\\
			%8
			&+172032x_1^3x_2^4x_3^3
			-132850x_1^3x_2^2x_3^5
			-509960x_1x_2^6x_3^3
			+567418x_1x_2^4x_3^5
			+15000x_1x_2^2x_3^7
			{}\\
			%9
			&-36944x_1^6x_2x_3^2
			+108528x_1^4x_2^3x_3^2
			+20003x_1^4x_2x_3^4
			+53328x_1^2x_2^5x_3^2
			-856566x_1^2x_2^3x_3^4
			-3300x_1^2x_2x_3^6{}\\
			%10
			&-118000x_2^7x_3^2
			+417147x_2^5x_3^4
			-118000x_2^3x_3^6
			-64x_1^7x_3
			+7328x_1^5x_2^2x_3
			+8192x_1^5x_3^3
			-14464x_1^3x_2^4x_3{}\\
			%11
			&+477776x_1^3x_2^2x_3^3
			-64x_1^3x_3^5
			+7200x_1x_2^6x_3
			-621232x_1x_2^4x_3^3
			+130440x_1x_2^2x_3^5
			-99720x_1^4x_2x_3^2
			{}\\
			%12
			&+264032x_1^2x_2^3x_3^2
			-27048x_1^2x_2x_3^4
			-340328x_2^5x_3^2
			-340328x_2^3x_3^4
			-512x_1^5x_3
			+57952x_1^3x_2^2x_3{}\\
			%13
			&-512x_1^3x_3^3
			+56736x_1x_2^4x_3
			+320064x_1x_2^2x_3^3
			+432x_1^4x_2
			-864x_1^2x_2^3
			-61440x_1^2x_2x_3^2
			+432x_2^5{}\\
			%14
			&-265536x_2^3x_3^2
			+432x_2x_3^4
			-1024x_1^3x_3
			+125568x_1x_2^2x_3
			+3456x_1^2x_2
			+3456x_2^3
			+3456x_2x_3^2
			+6912x_2.
			\end{split}
			\end{equation*}
	}}

	\begin{figure}[H]
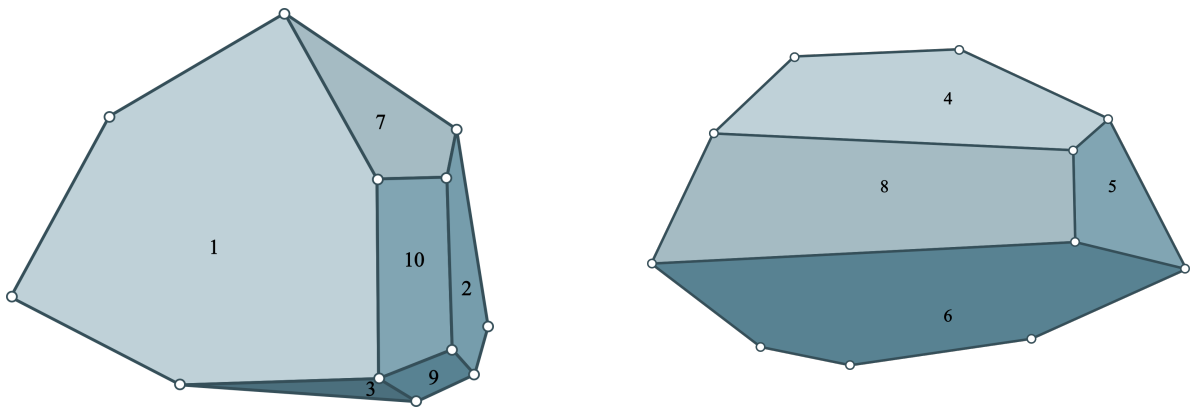

			\begin{center}
			\begin{minipage}[h]{0.45\linewidth}
			\begin{center}	
			\includegraphics[width=0.89\textwidth]{mN4_1}
			\end{center}
			\end{minipage}
			\hfill
				\begin{minipage}[h]{0.45\linewidth}
				\begin{center}
				\includegraphics[width=0.99\textwidth]{mN3_1}
			\end{center}
			\end{minipage}
			\caption{\small{The Newton polytope $\mathcal{N}_{\Delta}$}}
			\label{ris:mN}
		\end{center}	
		\end{figure}
Its Newton polytope~$\mathcal{N}_{\Delta}$ is depicted in Fig.~\ref{ris:mN} 
from two angles. It has ten two-dimensional faces. They are numbered in accordance with
the sequence of rays of the polyhedral fan in Fig.~2. Therefore, all
facets of the Newton polytope are detected.

%

%%%%%%%%%%%%%%%%%%%%%%%%%%%%%%%%%%% СЃРїРёСЃРѕРє Р»РёС‚РµСЂР°С‚СѓСЂС‹	

	\addcontentsline{toc}{section}{References}
	\begin{center}
	\renewcommand{\refname}{\centering References}
	 
	\end{center}
\bigskip

\noindent {\bf Authors' addresses}: 

\noindent Irina Antipova, Siberian Federal University, 79 Svobodny pr., 660041 Krasnoyarsk, Russia, iantipova@sfu-kras.ru

\noindent Ekaterina Kleshkova, Siberian Federal University, 79 Svobodny pr., 660041 Krasnoyarsk, Russia, ekleshkova@gmail.com

\end{document}